\newcommand{\cplx}{\mathbb{C}}
\newcommand{\oscr}{\mathcal{O}}
\newcommand{\hfr}{\mathfrak{h}}
\newcommand{\ints}{\mathbb{Z}}
\newcommand{\PP}{\mathcal{P}}
\newcommand{\wc}{\mathfrak{wc}}
\newtheorem{theorem}{Theorem}[section]
\newtheorem{definition}[theorem]{Definition}
\newtheorem{lemma}[theorem]{Lemma}
\newtheorem{proposition}[theorem]{Proposition}
\newtheorem{remark}[theorem]{Remark}
\begin{document}

\title[Finite-dimensional representations of rational Cherednik algebras of type D]{Towards a classification of finite-dimensional representations of rational Cherednik algebras of type D}

\author{Seth Shelley-Abrahamson}
\address[Seth Shelley-Abrahamson]{Department of Mathematics, Massachusetts Institute of Technology}
\email{sethsa@alumni.stanford.edu}

\author{Alec Sun}
\address[Alec Sun]{Phillips Exeter Academy}
\email{sundogx@gmail.com}

\begin{abstract}
Using a combinatorial description due to Jacon and Lecouvey of the wall crossing bijections for cyclotomic rational Cherednik algebras, we show that the irreducible representations $L_c(\lambda^\pm)$ of the rational Cherednik algebra $H_c(D_n, \cplx^n)$ of type $D$ for symmetric bipartitions $\lambda$ are infinite dimensional for all parameters $c$.  In particular, all finite dimensional irreducible representations of rational Cherednik algebras of type $D$ arise as restrictions of finite-dimensional irreducible representations of rational Cherednik algebras of type $B$.
\end{abstract}

\maketitle

\section{Introduction}  Rational Cherednik algebras were introduced by Etingof and Ginzburg \cite{EG}.  For any finite complex reflection group $W$ with reflection representation $\hfr$ and complex $W$-invariant function $c : S \rightarrow \cplx$ on the set of reflections $S \subset W$ of $W$, the rational Cherednik algebra $H_c(W, \hfr)$ is an infinite-dimensional associative $\cplx$-algebra deforming the semidirect product algebra $H_0(W, \hfr) = \cplx W \ltimes D(\hfr)$, where $D(\hfr)$ denotes the algebra of polynomial differential operators on the vector space $\hfr$.  Ginzburg, Guay, Opdam, and Rouquier \cite{GGOR} introduced a certain category $\oscr_c(W, \hfr)$ of representations of $H_c(W, \hfr)$ analogous to the BGG categories $\oscr$ appearing in the representation theory of complex semisimple Lie algebras.  The simple objects in $\oscr_c(W, \hfr)$ are naturally labeled by the irreducible representations of the underlying reflection group $W$, and for generic parameters $c$ this category is semisimple.  In this way $\oscr_c(W, \hfr)$ can be regarded as a deformation of the category of finite-dimensional complex representations of $W$.  The rational Cherednik algebras $H_c(W, \hfr)$ and the categories $\oscr_c(W, \hfr)$ have been studied in great detail and have deep connections with many other important objects in mathematics.  However, several very basic questions about these algebras remain open problems, including in many cases the classification and description of their finite-dimensional irreducible representations.

For a positive integer $n \geq 4$, let $B_n := S_n \ltimes (\ints/2\ints)^n$ denote the finite real reflection group of type $B$ and rank $n$, acting in its (complexified) reflection representation $\cplx^n$ by permutations and sign changes of the coordinates.  Let $D_n \subset B_n$ denote the index 2 reflection subgroup of $B_n$ of type $D$, consisting of those elements acting on $\cplx^n$ with an even number of sign changes.  The irreducible complex representations of $B_n$ are naturally labeled by bipartitions $\lambda = (\lambda^1, \lambda^2)$ of $n$; we denote the irreducible representation of $B_n$ labeled by the bipartition $\lambda$ by $V_\lambda$ (see \cite{GP}, for example, for the standard constructions).  The irreducible representations of $D_n$ are described similarly.  In particular, the representation $V_{(\lambda^1, \lambda^2)}$ of $B_n$ remains irreducible after restriction to $D_n$ when $\lambda^1 \neq \lambda^2$, i.e. when the bipartition $\lambda = (\lambda^1, \lambda^2)$ is not symmetric, and in that case $V_{(\lambda^1, \lambda^2)}$ is isomorphic to $V_{(\lambda^2, \lambda^1)}$ as a representation of $D_n$.  When $\lambda$ is symmetric, the restriction of $V_\lambda$ to $D_n$ splits into a direct sum of two non-isomorphic irreducible representations $V_\lambda^+$ and $V_\lambda^-$.  The representations $V_\lambda$ for non-symmetric $\lambda$ along with the representations $V_\lambda^\pm$ for symmetric $\lambda$ form a complete list of the irreducible representations of $D_n$ up to isomorphism, and the only isomorphisms among pairs of these representations are the isomorphisms $V_{(\lambda^1, \lambda^2)} \cong V_{(\lambda^2, \lambda^1)}$ for $\lambda^1 \neq \lambda^2$.  The irreducible objects in the categories $\oscr_c(B_n, \cplx^n)$ and $\oscr_c(D_n, \cplx^n)$ therefore are labeled in the same way; to an irreducible representation $V$ of $B_n$ or $D_n$, there is a corresponding irreducible representation $L_c(V)$ in $\oscr_c(B_n, \cplx^n)$ or $\oscr_c(D_n, \cplx^n)$, respectively.

In this paper, we show that the irreducible representations in $\oscr_c(D_n, \cplx^n)$ of the form $L_c(\lambda^\pm)$ for symmetric bipartitions $\lambda = (\lambda^1, \lambda^2) = (\lambda^2, \lambda^1)$ are infinite dimensional for all parameters $c$.  We reduce this problem to a combinatorial statement in terms of the wall crossing bijections for rational Cherednik algebras introduced by Losev \cite{L}, and we prove the corresponding combinatorial statement by an inductive argument using a concrete description of the wall crossing bijections provided recently by Jacon and Lecouvey \cite{JL}.  In particular, this implies that all finite-dimensional representations of rational Cherednik algebras of type $D$ arise as restrictions of finite-dimensional irreducible representations of rational Cherednik algebras of type $B$, and hence also that the orbits of these finite-dimensional representations with respect to the action by twists by the normalizer $N_W(D_n)$ of $D_n$ in any larger reflection group $W$ containing $D_n$ as a parabolic subgroup are trivial.

\subsection*{Acknowledgements} This paper represents the results of a research project undertaken by the second author, for which the first author served as a graduate student mentor, as part of MIT's Program for Research in Mathematics, Engineering, and Science for High School Students (PRIMES-USA).  We are very grateful to the PRIMES program for making this project possible.

\section{Background and Definitions}

\subsection{Rational Cherednik Algebras}  Let $W$ be a finite complex reflection group with reflection representation $\hfr$.  Let $S \subset W$ be the set of reflections in $W$, and let $c : S \rightarrow \cplx$ be a $W$-invariant function.  For each $s \in S$, let $\alpha_s \in \hfr^*$ be an eigenvector of $s$ with eigenvalue $\lambda_s \neq 1$, and let $\alpha_s^\vee \in \hfr$ be an eigenvector of $s$ with eigenvalue $\lambda_s^{-1}$, normalized so that $\langle \alpha_s, \alpha_s^\vee \rangle = 2$, where $\langle \cdot, \cdot \rangle$ denotes the natural evaluation pairing $\hfr^* \times \hfr \rightarrow \cplx$.  To this data one can associate the rational Cherednik algebra $H_c(W, \hfr)$, defined by generators and relations as follows \cite{EG}: $$H_c(W, \hfr) =  \frac{\cplx W \ltimes T(\hfr^* \oplus \hfr)}{\langle[x, x'] = [y, y'] = 0, \ \ [y, x] = \langle y, x\rangle - \displaystyle\sum_{s \in S} c_s \langle \alpha_s, y\rangle \langle x, \alpha_s^\vee \rangle s \ \ \forall x \in \hfr^*, y \in \hfr\rangle}.$$  The natural map $$\cplx[\hfr] \otimes \cplx W \otimes \cplx[\hfr^*] \rightarrow H_c(W, \hfr)$$ induced by multiplication is an isomorphism of vector spaces.  The category $\oscr_c(W, \hfr)$ is defined \cite{GGOR} to be the full subcategory of $H_c(W, \hfr)$-modules that are finitely generated over $\cplx[\hfr]$ and on which $\hfr \subset \cplx[\hfr^*]$ acts locally nilpotently.

The irreducible representations in $\oscr_c(W, \hfr)$ are naturally labeled by the irreducible representations of the group $W$.  In particular, let $V_\lambda$ be an irreducible representation of $W$.  The action of $W$ on $V_\lambda$ can be extended to $\cplx[\hfr^*] \rtimes \cplx W$ by letting $\hfr$ act by 0, and the \emph{standard module} $\Delta_c(V_\lambda) := H_c(W, \hfr) \otimes_{\cplx[\hfr^*] \rtimes \cplx W} V_\lambda$ is a representation in $\oscr_c(W, \hfr)$.  The representation $\Delta_c(V_\lambda)$ has a unique irreducible quotient, denoted $L_c(V_\lambda)$.  The irreducible representations $L_c(V_\lambda)$ appearing in this way are nonisomorphic and form a complete list, up to isomorphism, of the irreducible representations in $\oscr_c(W, \hfr)$.

\subsection{Algebras of Types $B$ and $D$}  For a positive integer $n \geq 4$, let $B_n$ denote the real reflection group of type $B$ and rank $n$ and let $D_n$ denote the real reflection group of type $D$ and rank $n$.  Recall that $B_n$ is isomorphic to the semidirect product $S_n \rtimes (\ints/2\ints)^n$, where $S_n$ is the symmetric group acting on the product $(\ints/2\ints)^n$ by permutations of the coordinates.  $B_n$ acts on its complexified reflection representation $\cplx^n$ by permutations and sign changes of the coordinates.  The group $D_n$ embeds naturally as an index 2 subgroup of $B_n$, consisting of those elements which act on $\cplx^n$ with an even number of sign changes.  The reflections through the hyperplanes $z_i = \pm z_j$ in $\cplx^n$ generate $D_n$, and these reflections form a complete conjugacy class in $B_n$. The remaining conjugacy class of reflections in $B_n$ consists of those reflections through the hyperplanes $z_i = 0$.  In particular, a parameter $c$ for the rational Cherednik algebra $H_c(D_n, \cplx^n)$ is determined by its value on a reflection through any of the hyperplanes $z_i = \pm z_j$, and therefore the parameter $c$ can be identified with a complex number $c \in \cplx$ in this way.  Similarly, a parameter for the rational Cherednik algebra of type $B$ can be identified with a pair of complex numbers $(c_1, c_2) \in \cplx^2$, with $c_1$ specifying the value of the parameter on the reflections through the hyperplanes $z_i = \pm z_j$ and $c_2$ specifying the value of the parameter on the reflections through the hyperplanes $z_i = 0$.  It follows immediately from the definition of the rational Cherednik algebra and its description as a vector space given in the previous section that for any $c \in \cplx$ the algebra $H_c(D_n, \cplx^n)$ embeds naturally as a subalgebra of $H_{c, 0}(B_n, \cplx^n)$.

Recall that a \emph{partition} of a nonnegative integer $n \geq 0$ is a weakly decreasing, possibly empty, list $\lambda_1 \geq \lambda_2 \geq \cdots \geq \lambda_l > 0$ of positive integers with sum $\sum_i \lambda_i$ equal to $n$.  We will often denote a partition $\lambda$ by the list of its parts, i.e. $\lambda = (\lambda_1, ..., \lambda_l)$, and we denote the sum of the parts of $\lambda$, i.e. its \emph{size}, by $|\lambda| := \sum_i \lambda_i$.  Let $\PP(n)$ denote the set of partitions of $n$, and let $\PP = \bigcup_{n = 0}^\infty \PP(n)$ denote the set of all partitions.  A \emph{bipartition} of $n$ is an ordered pair of partitions $\lambda = (\lambda^1, \lambda^2)$ such that $|\lambda| := |\lambda^1| + |\lambda^2| = n$.  Let $\PP^2(n)$ denote the set of bipartitions of $n$, and let $\PP^2 := \bigcup_{n = 0}^\infty \PP^2(n)$ denote the set of all bipartitions.

There is a natural bijection (see, e.g. \cite{GP}) $\lambda \mapsto V_\lambda$ between the set $\PP^2(n)$ of bipartitions of $n$ and the set of irreducible representations of $B_n$.  In particular, the correspondence $\lambda \mapsto L_{c_1, c_2}(\lambda) := L_{c_1, c_2}(V_\lambda)$ gives a bijection between $\PP^2(n)$ and the set of irreducible representations in $\oscr_{c_1, c_2}(B_n, \cplx^n)$.  The irreducible representations of $D_n$ are described similarly.  Call a bipartition $\lambda = (\lambda^1, \lambda^2)$ \emph{symmetric} if $\lambda^1 = \lambda^2$, and not symmetric otherwise.  The representation $V_\lambda$ of $B_n$ remains irreducible after restriction to $D_n$ when $\lambda$ is not symmetric.  When $\lambda$ is symmetric, however, $V_\lambda$ splits upon restriction to $D_n$ into a direct sum of two non-isomorphic irreducible representations $V_\lambda^+$ and $V_\lambda^-$ of $D_n$.  For symmetric bipartitions $\lambda$, let $L_c(\lambda^+) := L_c(V_\lambda^+)$ and $L_c(\lambda^-) := L_c(V_\lambda^-)$ denote the associated irreducible representations in $\oscr_c(D_n, \cplx^n)$.  The following theorem is the main result of this paper:

\begin{theorem} \label{main-theorem} The representation $L_c(\lambda^\pm)$ of $H_c(D_n, \cplx^n)$ is infinite-dimensional for all symmetric bipartitions $\lambda$ of $n$ and for all parameters $c \in \cplx$.\end{theorem}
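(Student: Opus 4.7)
The plan is to reduce Theorem \ref{main-theorem} to a combinatorial assertion about wall-crossing bijections on bipartitions, and then verify that assertion by induction using the concrete description of wall-crossings due to Jacon and Lecouvey. The first step is to pass from type $D$ to type $B$. Since $H_c(D_n, \cplx^n)$ sits inside $H_{c,0}(B_n, \cplx^n)$ and $D_n$ is a normal subgroup of index $2$ in $B_n$, a Clifford-type analysis for simple modules in category $\oscr$ shows that for a symmetric bipartition $\lambda$ the restriction of $L_{c,0}(\lambda)$ to $H_c(D_n, \cplx^n)$ decomposes as $L_c(\lambda^+) \oplus L_c(\lambda^-)$. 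Hence if both $L_c(\lambda^\pm)$ were finite-dimensional, $L_{c,0}(\lambda)$ would be finite-dimensional as well, and it suffices to prove that $L_{c,0}(\lambda)$ is infinite-dimensional for every symmetric bipartition $\lambda$ and every parameter $c \in \cplx$.

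The second step is to invoke Losev's wall-crossing bijections. For each wall (aspherical hyperplane) in the $(c_1,c_2)$-parameter space for type $B$, these yield a bijection $\wc$ on $\PP^2(n)$ that matches labels of finite-dimensional simples on opposite sides of the wall. Consequently, the set of bipartitions labelling finite-dimensional irreducibles in any chamber is the image of the analogous set in any reference chamber under a composition of wall-crossings. By choosing a reference chamber in which the finite-dimensional labels can be understood concretely (for example a generic chamber where there are none, or a chamber near $c=0$), the problem reduces to the purely combinatorial statement that no iterated composition of Jacon--Lecouvey wall-crossings can take a finite-dimensional label in the reference chamber to a symmetric bipartition.

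The third step is the combinatorial induction itself. Using the Jacon--Lecouvey description of each wall-crossing as an explicit operation on charged bipartitions (equivalently on abaci or level-$2$ Fock space crystals), I would track an invariant distinguishing symmetric from non-symmetric bipartitions across chambers. A natural candidate is the swap involution $(\lambda^1, \lambda^2) \mapsto (\lambda^2, \lambda^1)$: symmetric bipartitions are its fixed points, and I expect the Jacon--Lecouvey moves to interact with this involution in a controlled way (either equivariantly, or equivariantly after a compensating involution of the parameter space) so that an orbit of non-fixed labels cannot collapse onto a fixed label under wall-crossing.

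The main obstacle will be this last step. The Jacon--Lecouvey rules mix the two components of a bipartition asymmetrically depending on which wall one crosses, and the required ``swap-equivariance'' statement is not uniform across wall types. A careful case analysis for each type of wall, together with an inductive ordering of chambers compatible with the Jacon--Lecouvey combinatorics, will be needed. One also has to address the edge case where $c$ itself lies on a wall, for which the direct chamber argument does not apply immediately; here I would use a separate semicontinuity or deformation argument relating simples at wall parameters to those in the surrounding chambers to deduce infinite-dimensionality from the chamber statement.
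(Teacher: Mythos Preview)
Your first two steps match the paper: the reduction from $D_n$ to $B_n$ via $L_{c,0}(\lambda)\cong L_c(\lambda^+)\oplus L_c(\lambda^-)$, and the use of Losev's wall-crossing bijections to transport the question to an asymptotic chamber where the finite-dimensional labels are precisely those with $\lambda^2=\emptyset$. (The paper also inserts an intermediate reduction to $c=r/e$ with $e$ even, using that $H_{c,0}(B_n,\cplx^n)$ has no finite-dimensional modules otherwise, which you skip but is not the hard point.)

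The gap is in your third step. The swap-equivariance you hope for does not hold for the relevant wall-crossings. The only symmetry on the type-$B$ side sending $(\lambda^1,\lambda^2)\mapsto(\lambda^2,\lambda^1)$ is the twist by the sign character of $(\ints/2\ints)^n$, and it simultaneously sends $c_2\mapsto -c_2$. But the composite $\Theta_{e,(0,e/2)}=\cdots\circ\Phi^\infty_{(0,3e/2)}\circ\Phi^\infty_{(0,e/2)}$ pushes the charge monotonically in one direction; conjugating by swap yields a composite of wall-crossings going the \emph{opposite} way, so there is no involution on $\PP^2(n)$ intertwining $\Theta$ with itself. Knowing that the asymptotic finite-dimensional labels $(\mu,\emptyset)$ are non-symmetric therefore says nothing about symmetry of their preimages under $\Theta$. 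The paper's actual combinatorial argument is different and much more specific: it proves that $\Theta_{e,(0,e/2)}$ leaves the single integer $\lambda^2_1$ invariant, so a symmetric $\lambda$ (with $\lambda^2_1=\lambda^1_1>0$) lands on a bipartition with $\lambda^2\neq\emptyset$. This is done by an interlocking induction on the two-row tableaux $\binom{a_k}{b_k}$, showing via auxiliary statements $A_k,\ldots,G_k$ about swaps, holes, and non-crossing matchings that the rightmost top-row entry is never swapped and hence merely shifts by $e$ at each step. Symmetry of $\lambda$ enters only through the base-case inequality $a_0^{i+e/2}=b_0^i+e/2$ for the tableau of $|\lambda,(0,e/2)\rangle$, not through any equivariance of the wall-crossings.
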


We will prove Theorem \ref{main-theorem} by using the combinatorics of wall crossing bijections, recalled in the following section.  The algebra $H_c(D_n, \cplx^n)$ does not have any nonzero finite-dimensional representations unless $c$ is a non-integral rational number.  After twisting by the sign character of $D_n$, one may assume that $c$ is of the form $c = r/e$ for a positive integer $e$ and negative integer $r < 0$ relatively prime to $e$.  As there are isomorphisms of vector spaces $L_{c, 0}(\lambda) \cong L_c(\lambda^+) \oplus L_c(\lambda^-)$ and $L_c(\lambda^+) \cong L_c(\lambda^-)$, it suffices to show that the representation $L_{c, 0}(\lambda)$ of $H_{c, 0}(B_n, \cplx^n)$ is infinite-dimensional.  By \cite[Lemma 4.2]{L}, $H_{-r/e, 0}(B_n, \cplx^n)$ has no nonzero finite-dimensional representations unless $e$ is an even integer, so it suffices to reduce further to the case $c = r/e$ with $e \in 2\ints^{> 0}$ a positive even integer and $r < 0$ a negative integer relatively prime to $e$. 

\subsection{Wall-Crossing Bijections in Type $B$}\label{2.3} \label{reduction-section} In \cite{L}, Losev introduced bijections $\wc_{c'\leftarrow c}$, depending on certain parameters $c$ and $c'$ for the rational Cherednik algebra of type $B$, on the set of bipartitions $\PP^2(n)$ of fixed rank $n$ (and, more generally, for multipartitions in the cyclotomic case).  We will recall in this section only the necessary facts about these bijections needed to prove Theorem \ref{main-theorem}.

Fix a positive even integer $e$ as in the previous section.  To any pair of integers $s = (s_1, s_2) \in \ints^2$, which we refer to as a \emph{charge}, we can associate a parameter $(c_1, c_2)$ for a rational Cherednik algebra of type $B$ by setting $c_1 = -1/e$ and $c_2 = (s_2 - s_1 - \frac{e}{2})/e.$  Note that the pair $s = (0, e/2)$ corresponds to the parameter $c = (-1/e, 0)$ relevant to Theorem \ref{main-theorem}.  Let $s = (s_1, s_2) \in \ints^2$ be a pair of integers, let $s' = (s_1, s_2 + e)$, and let $c$ and $c'$ be the parameters for the rational Cherednik algebra of type $B$ corresponding to $s$ and $s'$, respectively.  Losev's \emph{wall crossing bijection} $\wc_{c'\leftarrow c}$ associated to these parameters is a bijection $\wc_{c'\leftarrow c} : \PP^2(n) \rightarrow \PP^2(n)$ with the property that for all bipartitions $\lambda \in \PP^2(n)$ the representation $L_c(\lambda)$ is finite-dimensional if and only if the representation $L_{c'}(\wc_{c'\leftarrow c}(\lambda))$ is finite-dimensional \cite[Corollary 2.13]{L}.  Furthermore, if $s_1 < s_2 - n$, then for all bipartitions $\lambda = (\lambda^1, \lambda^2) \in \PP^2(n)$ the representation $L_c(\lambda)$ is infinite-dimensional unless $\lambda_2 = \emptyset$ \cite[Proposition 5.3]{L} (note that here we use the opposite convention of Losev on the sign of the parameter $c_1 = -1/e$).  This gives a strategy for proving Theorem \ref{main-theorem}: show that for any symmetric bipartition $\lambda$, the bipartition obtained by successive applications of wall crossing bijections has nonempty second component.  By the description of the wall crossing bijections given in \cite[Section 4]{L}, the wall crossing bijections that must be applied to a bipartition $\lambda$ in the cases $c_1 = -1/e$ and $c_1 = r/e$ for $r < 0$ a negative integer relatively prime to $e$ are the same, so we may assume $c_1 = -1/e$ and prove Theorem \ref{main-theorem} in this case.

A \emph{charged bipartition} is the data of a bipartition $\lambda$ along with a \emph{charge} $s = (s_1, s_2) \in \ints^2$, denoted $|\lambda, s\rangle$.  There is a natural correspondence between charged bipartitions and another type of combinatorial objects called \emph{two-row tableaux}, which are defined as follows.  Let $|\lambda, s\rangle$ be a charged bipartition with $|\lambda|=n$ and $s=(s_1,s_2).$ Let $d\ge |s_1-s_2|$ be the minimal integer such that $\lambda^1_{d+1+s_1-s_2}=\lambda^2_{d+1}=0$ if $s_2\ge s_1$ or such that $\lambda^1_{d+1}=\lambda^2_{d+1+s_1-s_2} = 0$ if $s_1\ge s_2.$ To the charged bipartition $|\lambda, s\rangle,$ associate the following \emph{two-row tableau} (two row array of integers)
\hfill \break
\begin{center}
$T(\lambda^1, \lambda^2)=$
\begin{tabular}{c c c c c c}
$s_2-d+\lambda^2_{d+1}$ & $\cdots$ & $\cdots$ & $\cdots$ & $s_2-1+\lambda^2_2$ & $s_2+\lambda^2_1$\\
$s_2-d+\lambda^1_{d+1+s_1-s_2}$ & $\cdots$ & $s_1+\lambda^1_1.$
\end{tabular}
\end{center}
\hfill \break
when $s_2\ge s_1$ and the following two-row tableau
\hfill \break
\begin{center}
$T(\lambda^1, \lambda^2)=$
\begin{tabular}{c c c c c c}
$s_1-d+\lambda^2_{d+1+s_1-s_2}$ & $\cdots$ & $s_2+\lambda^2_1.$\\
$s_1-d+\lambda^1_{d+1}$ & $\cdots$ & $\cdots$ & $\cdots$ & $s_1-1+\lambda^1_2$ & $s_1+\lambda^1_1$\\
\end{tabular}
\end{center}
\hfill \break
when $s_1\ge s_2.$ We denote the top row of $S(\lambda^1, \lambda^2)$ by $L_2$ and the bottom row by $L_1$, so that $S(\lambda^1, \lambda^2) = \binom{L_2}{L_1}$.  Note that the top row corresponds to $\lambda^2$ and the bottom row corresponds to $\lambda^1.$ By the definition of $d,$ the entries in the leftmost column of $S(\lambda^1, \lambda^2)$ are equal to $s_2-d$ when $s_2\ge s_1$ and $s_1-d$ when $s_1 \ge s_2.$  Note that the original charged bipartition $|\lambda, s\rangle$ can be recovered from its associated two-row tableau $S(\lambda^1, \lambda^2)$.  In particular, we can recover the values $s_1, s_2$ and $d$ from the lengths of $L_1$ and $L_2$ and hence also the charged bipartition $|\lambda, s\rangle$. 

We now recall Jacon and Lecouvey's combinatorial procedure \cite{JL} for computing the wall crossing bijections $\wc_{c' \leftarrow c}$.  For each charge $s = (s_1, s_2) \in \ints^2$, there is a bijection $\Phi^\infty_{(s_1, s_2)} : \PP^2(n) \rightarrow \PP^2(n)$ defined in terms of associated two-row tableaux as follows.  Let $\lambda = (\lambda^1, \lambda^2) \in \PP^2(n)$ be a bipartition of $n$.  Let $S(\lambda^1, \lambda^2) = \binom{L_2}{L_1}$ be the two-row tableau associated to the charged bipartition $|\lambda, s\rangle$.  We construct a new two-row tableau $\binom{L_2'}{L_1'}$ from the two-row tableau $\binom{L_2}{L_1}$ in a combinatorial manner; the bijection $\Phi^\infty_{(s_1, s_2)}$ is then defined by setting $\Phi^\infty_{(s_1, s_2)}(\lambda) = \lambda'$, where $\lambda' \in \PP^2(n)$ is the bipartition such that the charged bipartition $|\lambda', s\rangle$ has associated two-row tableau $\binom{L_2'}{L_1'}$.  The transformation of the two-row tableau $\binom{L_2}{L_1}$ into the two-row tableau $\binom{L_2'}{L_1'}$ consists of two steps: (1) entries in the bottom row of the tableau are swapped with some entries in the top row of the tableau, and (2) each of the rows, after the swapping procedure, is sorted into increasing order.

We first describe procedure (1). Suppose that $s_2\ge s_1,$ which will be true for all charges we consider in this paper (the procedure for $s_1\ge s_2$ is analogous). Consider $x_1 := \min\{t\in L_1\}.$ Associate to $x_1$ the integer $y_1\in L_2$ defined by $$y_1 :=
\begin{cases}
\max\{z\in L_2|z\le x_1\} \text{ if } \min\{z\in L_2\}\le x_1,\\
\max\{z\in L_2\} \text{ otherwise }\\
\end{cases}$$
Then repeat the same procedure with the arrays $L_2-\{y_1\}$ and $L_1-\{x_1\}$, producing elements $x_2 \in L_1-\{x_1\}$ and $y_2 \in L_2-\{x_2\}.$ By induction this yields a sequence $\{y_1,\ldots,y_{d+1+s_1-s_2}\}\in L_2.$

Now we describe procedure (2).  Let $L_1'$ be the list of integers obtained by sorting, in increasing order, the set $\{y_1,\ldots,y_{d+1+s_1-s_2}\}\in L_2$, and let $L_2'$ be the list of integers obtained by sorting the set $L_2-\{y_1,\ldots,y_{d+1+s_1-s_2}\}+L_2$ in increasing order.  These lists define the two-row tableau $\binom{L_2'}{L_1'}$ and hence the bipartition $\lambda' = \Phi^\infty_{(s_1, s_2)}(\lambda)$.

Let $c = (-1/e, c_2)$ and $c' = (-1/e, c_2')$ be the parameters for the rational Cherednik algebra of type $B$ with first component $-1/e$ associated to the charges $(s_1, s_2)$ and $(s_1, s_2 + e)$, respectively.  In \cite[Theorem 4.10]{JL}, Jacon and Lecouvey prove that the bijection $\Phi^\infty_{(s_1, s_2)}$ described combinatorially above equals Losev's wall crossing bijection $\wc_{c' \leftarrow c}$.

We will refer to the procedures (1) and (2) as \emph{swapping} and \emph{sorting}, respectively.

\begin{definition}
Let $e > 1$ be a positive integer, and let $s = (s_1, s_2) \in \ints^2$ be a charge. Let $\Theta_{e, s}$ denote the iterated composition of wall-crossing bijections $$\Theta_{e, s} := \prod_{k = 0}^\infty \Phi^\infty_{(s_1, ke + s_2)} := \cdots \circ \Phi^\infty_{(s_1, Ne + s_2)} \circ \cdots \circ \Phi^\infty_{(s_1, e + s_2)} \circ \Phi^\infty_{(s_1, s_2)} : \mathcal{P}^2(n) \rightarrow \mathcal{P}^2(n).$$
\end{definition}

\begin{remark} Note that the bijection $\Phi^\infty_{(s_1, ke + s_2)}$ fixes $\lambda$ when $|\lambda| < ke + s_2 - s_1$.  It follows that $\Theta_{e, s}$ acts as a finite product on any bipartition and is a well-defined operator on bipartitions.\end{remark}

By the discussion in Section \ref{reduction-section}, to prove Theorem \ref{main-theorem} it suffices to prove the following:

\begin{theorem} \label{combinatorial-theorem} Let $\lambda \in \PP^2(n)$ be a symmetric bipartition of $n$.  For any positive even integer $e$, the operator $\Theta_{e,(0,\frac{e}{2})}$ leaves the first part $\lambda^2_1$ of the second component $\lambda^2$ of $\lambda$ invariant.\end{theorem}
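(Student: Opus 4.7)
The plan is to prove, by induction on $k\ge 0$, that each intermediate bipartition $\lambda^{(k)}$ in the orbit of $(\mu,\mu)$ under $\Theta_{e,(0,e/2)}$ satisfies $\lambda^{(k),2}_1 = \mu_1$ together with a ``domination'' invariant $\lambda^{(k),1}_b \ge \lambda^{(k),2}_{E_k+b}$ for every $b\ge 1$, where $E_k := e/2+ke$ is the shift of the next wall-crossing $\Phi^\infty_{(0,E_k)}$ still to be applied. Since $\lambda^2_1 = \max L_2 - s_2$ and $s_2$ is unchanged by $\Phi^\infty_{(s_1,s_2)}$, preserving $\lambda^2_1$ reduces to showing that the maximum entry of the top row $L_2$ is never picked as a $y_j$ in the swap. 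In the ``if'' branch the inequality $y_j\le x_j\le \max L_1$ automatically excludes $\max L_2$ whenever $\max L_2 > \max L_1$, i.e., $\lambda^1_1 < s_2+\lambda^2_1$; so the only way $\max L_2$ could be lost is through the ``otherwise'' branch, and the invariant above is chosen precisely to rule this out.

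A direct count shows that, assuming the ``if'' branch has fired at every earlier step, the ``otherwise'' branch fails to trigger at step $j$ exactly when $T_j\le B_j$, where $T_j$ and $B_j$ denote the $j$-th leftmost entries of $L_2$ and $L_1$ respectively. Translating through the explicit formulas $T_i=s_2-d+i-1+\lambda^2_{d+2-i}$ and $B_j=s_2-d+j-1+\lambda^1_{d+2-E-j}$, this becomes the domination condition $\lambda^1_b\ge \lambda^2_{E+b}$ for every $b\ge 1$, with $E=s_2-s_1$. The base case $k=0$ is immediate from $\mu_b\ge \mu_{e/2+b}$. The companion inequality $\lambda^{(k),1}_1\le \mu_1$, needed to keep $\max L_2>\max L_1$, propagates automatically since every $y_j$ satisfies $y_j\le x_j\le \max L_1$.

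For the inductive step I would reinterpret the ``if''-only swap as the canonical non-crossing bracket matching on the sorted multiset $L_1\cup L_2$: each $L_2$-entry is an open bracket, each $L_1$-entry a close bracket, and ties are resolved by placing opens before closes. The $y_j$'s are exactly the matched opens, the new bottom row $L_1'$ is the sorted list of matched opens, and the new top row $L_2'$ is the sorted union of the unmatched opens with all of $L_1$. Reading $\lambda^{(k+1)}$ off from $(L_1',L_2')$, one then has to verify the updated domination condition $\lambda^{(k+1),1}_b\ge \lambda^{(k+1),2}_{E_{k+1}+b}$ with the enlarged shift $E_{k+1}=E_k+e$.

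The main obstacle is this final column-by-column verification. It requires showing that the rearrangement of entries caused by the swap, combined with the $e$-increment in the shift, preserves dominance at every $b\ge 1$. I expect the argument to case-split on each column of the new tableau: at columns where both the new top and bottom entries came from matched brackets, the new inequality follows directly from the old domination; at columns involving a newly-inserted $L_1$-entry in the top row or a matched open moved down to the bottom, one uses the non-crossing property of the bracket matching to confine the displaced entries to the slack provided by the $e$-increment.
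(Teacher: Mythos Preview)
Your overall strategy matches the paper's: both arguments aim to show that the ``otherwise'' branch (the paper calls these \emph{cycles}) never fires by maintaining an invariant at each step, and both observe that this invariant is equivalent to the column-domination condition $b_k^j \ge a_k^j$ (your $T_j \le B_j$, the paper's statement $A_k$). Your reduction of the theorem to the absence of cycles, your base case, and your propagation of $\lambda^{(k),1}_1 \le \mu_1$ are all correct and essentially appear in the paper.

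The genuine gap is exactly where you locate it: the inductive propagation of the domination invariant. Your sketch proposes a direct case-split using only the non-crossing property of the matching at step $k$, but the paper's proof indicates this is not enough information. The paper strengthens the inductive hypothesis by introducing the \emph{displacement} $D(i,k)$ (the horizontal length of the non-crossing arrow at position $i$) and carrying two additional statements: a sharpened domination $F_k$, namely $b^i_{k+1} \ge a^{i+D(i,k)}_{k+1}$ (strictly stronger than $A_{k+1}$ since $D(i,k)\ge 0$), and a temporal monotonicity $G_k$, namely $D(j,k-1) \le D(j,k)$ for all $j$. The decisive case in their induction (their Case~2.2) compares an entry of $\alpha_{k-1}$ with an entry of $\beta_{k-1}$, and this comparison only goes through because one knows the displacements at step $k-1$ are bounded by those at step $k$. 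Your invariant, which records only step-$k$ data, has no access to this two-step memory, and your proposed case-split gives no indication of how to recover it. So the ``slack provided by the $e$-increment'' is not by itself what closes the argument; what closes it is that displacements only grow from one wall-crossing to the next, and this needs to be proved simultaneously with the domination.

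A minor point: your claim that the ``otherwise'' branch fails at step $j$ \emph{exactly} when $T_j \le B_j$ is stronger than what you need and not obviously true (the $y_i$ chosen at earlier steps need not be $T_1,\ldots,T_{j-1}$). Only the sufficiency direction is required, and that is the pigeonhole argument you and the paper both give.
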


The rest of this paper is dedicated to proving Theorem \ref{combinatorial-theorem}.

\section{Infinite-Dimensionality of the Representations $L_c(\lambda^\pm)$}

\begin{definition}
Let $\wc$ denote the bijection on the set of two-row tableaux corresponding to the bijection $|\lambda, s\rangle \mapsto |\Phi^\infty_{s,e}(\lambda), s\rangle$ on the set of charged bipartitions.
\end{definition}

In the language of the previous section, $\wc$ is computed by performing the swapping procedure (1) followed by the sorting procedure (2).

\begin{definition}
Let $\Delta_e$ denote the bijection on the set of two-row tableaux which sends the two-row tableau $\binom{a}{b}$ representing a charged bipartition $|\lambda,(s_1,s_2)\rangle$ to the two-row tableau $\Delta_e\left(\binom{a}{b}\right)$ representing the charged bipartition $|\lambda,(s_1,s_2+e)\rangle.$
\end{definition}

Fix a symmetric bipartition $\lambda$ of $n$ and a positive even integer $e>0.$  Let $\binom{a_0}{b_0}$ be the two-row tableau associated to $|\lambda,(0,\frac{e}{2})\rangle.$ For $k\ge 0$, let $\binom{\alpha_k}{\beta_k}=\mathfrak{wc}\left(\binom{a_k}{b_k}\right)$ and let $\binom{a_{k + 1}}{b_{k + 1}}=\Delta_e\left(\binom{\alpha_{k}}{\beta{k}}\right).$
Label the entries of $\binom{\alpha_{k}}{\beta_{k}}$ by $\binom{\alpha_k}{\beta_k} = $
\begin{center}
\begin{tabular}{c c c c}
$\alpha_k^1$&$\alpha_k^2$&$\cdots$&$\alpha_k^{d+ke+\frac{e}{2}}$\\
$\beta_k^1$&$\cdots$&$\beta_k^d.$
\end{tabular}
\end{center}
and similarly label the entries of $\binom{a_{k+1}}{b_{k+1}}$ by $\binom{a_{k+1}}{b_{k+1}}=$
\begin{center}
\begin{tabular}{c c c c}
$a_{k+1}^1$&$\cdots$&$\cdots$&$a_{k+1}^{d+(k+1)e+\frac{e}{2}}$\\
$b_{k+1}^1$&$\cdots$&$b_{k+1}^d.$
\end{tabular}
\end{center}

\begin{remark} \label{delta-e}
Notice that it is easy to describe the entries of $\binom{a_{k + 1}}{b_{k + 1}}=\Delta_e\left(\binom{\alpha_{k}}{\beta{k}}\right)$ in terms of the entries of $\binom{\alpha_k}{\beta_k}$.  In particular, we have $b_{k + 1}^i = \beta_k^i$ for $1 \leq i \leq d$, $a_{k + 1}^i = \beta_k^1 + i - 1$ for $1 \leq i \leq e$, and $a_{k + 1}^i = \alpha^{i - e}_k + e$ for $i > e$.
\end{remark}

Notice also that for all sufficiently large $k$, the charged bipartition associated to the two-row tableau $\binom{a_k}{b_k}$ is $|\Theta_{e, s}(\lambda), (0, ke + \frac{e}{2})\rangle.$

Recall from Section \ref{2.3} that the bijection $\mathfrak{wc}$ maps $\binom{a_k}{b_k}$ to $\binom{\alpha_k}{\beta_k}$ by swapping each integer $b^j_k$ with the maximal unused integer $a^{i(j)}_k$ from the top row for which $a^{i(j)}_k\le b^j_k$, if such an integer exists, and otherwise with the largest entry $a^{i_j}_k$ in the top row that has not already been swapped.

\begin{definition}
In the former case, we write $S(b^j_k)=a^{i(j)}_k$ and $S(a^{i(j)}_k)=b^j_k.$ We say that the entries $b^j_k$ and $a^{i(j)}_k$ are \emph{swapped}.  In the latter case, then we call $a^{i_j}_k$ a \emph{cycle}.\end{definition}

\begin{definition}
Define a \emph{hole} in a two-row tableau $\binom{a}{b}$ to be an integer $a^i$ in the top row that is not swapped with an entry in the bottom row in the procedure defining $\wc$.
\end{definition}

There are $ke+\frac{e}{2}$ holes in $\binom{a_k}{b_k}$. Note that $\mathfrak{wc}$ acts on the two row tableau $\binom{a_k}{b_k}$ by exchanging, in an order-preserving manner, the \emph{ordered} set of all $b^1_k<\cdots<b^d_k$ in the bottom row with a uniquely determined ordered set of $d$ integers $a^{i_1}_k<\cdots<a^{i_d}_k$ from the top row, and then sorting the two rows. Correspondingly, we make the following definition:

\begin{definition} \label{M}
Denote $M(b^j_k)=a^{i_j}_k$ and $M(a^{i_j}_k)=b^j_k.$
\end{definition}

\begin{remark} \label{meta-increasing} Note that when $M(a_k^x)$ and $M(a_k^y)$ are defined, we have $M(a_k^x) < M(a_k^y)$ when $x < y$.\end{remark}

The following definitions will be convenient in the proof:

\begin{definition}
For each nonnegative integer $k \geq 0$, define statements $A_k, B_k, C_k$ and $D_k$ as follows:
\begin{enumerate}
\item Let $A_k$ be the statement that in $\binom{a_k}{b_k}$ we have $b^j_k\ge a^j_k$ for all $1\le j\le d.$ 

\item Let $B_k$ be the statement that there are no cycles in the tableau $\binom{a_k}{b_k}$.

\item Let $C_k$ be the statement that if $M(b^j_k)=a^{i_j}_k$ then $b^j_k\ge a^{i_j}_k.$

\item Let $D_k$ be the statement that $a^i_k=\alpha^i_k$ for all holes $a^i_k.$
\end{enumerate}
\end{definition}

\begin{lemma}\label{reduction-lemma} If $B_k$ is true for all integers $k \geq 0$, then Theorem \ref{combinatorial-theorem} holds.\end{lemma}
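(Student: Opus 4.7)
The plan is to reduce Theorem \ref{combinatorial-theorem} to the assertion that, for every $k \geq 0$, the rightmost (hence largest) entry of the top row of $\binom{a_k}{b_k}$ is a hole. If that holds, procedure (1) leaves this entry untouched and procedure (2) leaves it in the final position, so $\alpha_k^{d+ke+e/2} = a_k^{d+ke+e/2}$. Combined with Remark \ref{delta-e}, which implies $a_{k+1}^{d+(k+1)e+e/2} = \alpha_k^{d+ke+e/2} + e$, this forces the rightmost top entry to grow by exactly $e$ at each iteration. Since $\lambda^2_1$ of a charged bipartition is the rightmost top entry minus $s_2$, and the second component of the charge also grows by $e$ at each iteration, invariance of $\lambda^2_1$ under $\Theta_{e,(0,\frac{e}{2})}$ will follow once $k$ is large enough for the sequence to stabilize.

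The key inductive claim is: if $B_k$ holds for every $k$, then $\max a_k > \max b_k$ for every $k$. For the base case, set $\mu := \lambda^1 = \lambda^2$; since the top and bottom rows of $\binom{a_0}{b_0}$ end in $s_2 + \mu_1 = \frac{e}{2} + \mu_1$ and $s_1 + \mu_1 = \mu_1$ respectively, and each row is strictly increasing, we have $\max a_0 - \max b_0 = \frac{e}{2} > 0$. For the inductive step, assuming $\max a_k > \max b_k$, no bottom entry $b_k^j$ can trigger the selection of $\max a_k$: the normal selection rule, which picks the maximal unused top entry $\leq b_k^j$, would require $b_k^j \geq \max a_k$, contradicting the hypothesis, while the cycle rule is excluded by $B_k$. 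Hence $\max a_k$ is a hole, which gives $\alpha_k^{d+ke+e/2} = a_k^{d+ke+e/2}$, and every selected $y_j$ is strictly less than $\max a_k$, so $\max \beta_k \le a_k^{d+ke+e/2-1} \le \max a_k - 1$ (using that the top row of $a_k$ is a strictly increasing sequence of integers).

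To close the induction I apply Remark \ref{delta-e}. The top row of $a_{k+1}$ consists of the initial block $\beta_k^1, \beta_k^1 + 1, \dots, \beta_k^1 + e - 1$ followed by $\alpha_k^1 + e, \dots, \alpha_k^{d+ke+e/2} + e$; since $\beta_k^1 \le \max \beta_k < \max a_k$, the maximum of the new top row is $\alpha_k^{d+ke+e/2} + e = \max a_k + e$, while $\max b_{k+1} = \max \beta_k \le \max a_k - 1 < \max a_{k+1}$. The induction propagates, and iterating yields $\max a_k = \max a_0 + ke = \frac{e}{2} + \mu_1 + ke$. For $k$ sufficiently large the tableau $\binom{a_k}{b_k}$ represents $|\Theta_{e,(0,\frac{e}{2})}(\lambda),(0, ke + \frac{e}{2})\rangle$, and therefore $(\Theta_{e,(0,\frac{e}{2})}(\lambda))^2_1 = \max a_k - (ke + \frac{e}{2}) = \mu_1 = \lambda^2_1$, as required.

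The main obstacle is identifying the right invariant. The naive statement that nothing in the top row moves is false, as swaps routinely rearrange smaller top entries, and even the weaker statement that the rightmost position is fixed is not self-propagating; one must isolate the strict dominance $\max a_k > \max b_k$, which is both delivered by the symmetry of $\lambda$ in the base case and preserved through each composite step $\Delta_e \circ \wc$. The hypothesis $B_k$ enters precisely to rule out the cycle exception in the selection rule, without which $\max a_k$ could be pulled into the bottom row even though no bottom entry exceeds it.
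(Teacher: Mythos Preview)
Your proof is correct and follows essentially the same route as the paper's: show that the rightmost top entry is always a hole, deduce that it increases by exactly $e$ at each step, and conclude that $\lambda^2_1$ is preserved since the second charge coordinate also increases by $e$. The paper's proof compresses this into two sentences and simply asserts that ``$B_k$ true for all $k$ implies the last entry of the top row is a hole''; you supply the missing justification by carrying the auxiliary inequality $\max a_k > \max b_k$ through the induction, which is exactly what is needed (note that $B_k$ alone, for a single arbitrary tableau, does \emph{not} force the last top entry to be a hole, so the inductive invariant together with the symmetric base case is genuinely required).
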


\begin{proof} If $B_k$ is true for all integers $k \geq 0$, then the last entry of the top row of $\binom{a_k}{b_k}$ is a hole, and in particular for all $k \geq 0$ the last entry of the first row of $\binom{a_{k + 1}}{b_{k + 1}}$ is obtained from the last entry of the first row of $\binom{a_k}{b_k}$ by adding $e$.  As the charge associated to the two-row tableau $\binom{a_k}{b_k}$ is $(0, ke + e/2)$, Theorem \ref{combinatorial-theorem} follows immediately from the definition of the correspondence between two-row tableaux and charged bipartitions.\end{proof}

Therefore, to prove Theorem \ref{combinatorial-theorem}, we will show that $B_k$ holds for all $k \geq 0$.  This will be achieved by an inductive argument involving the statements $A_k, B_k, C_k$ and $D_k$ defined above.

\begin{remark}
$A_0$ is true by inspection of the two-row tableau associated to $|\lambda, (0, \frac{e}{2})\rangle$.
\end{remark}

\begin{proposition}
$A_k$ implies $B_k.$
\end{proposition}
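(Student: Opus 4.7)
The plan is to show that, under the hypothesis $A_k$, the ``otherwise'' branch in the definition of the swapping procedure is never invoked, so no cycles can be created in $\binom{a_k}{b_k}$.

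Recall that the swapping procedure processes the entries of the bottom row in increasing order $b_k^1 < b_k^2 < \cdots < b_k^d$ and, at each step $j$, pairs $b_k^j$ with the largest unused top-row entry that is $\le b_k^j$ whenever such an entry exists, and otherwise (the cycle case) with the largest remaining top-row entry. Thus a cycle occurs at step $j$ if and only if every top-row entry still available after the previous $j-1$ pairings is strictly greater than $b_k^j$. To prove $B_k$ it suffices to exhibit, for each $j$, at least one unused top-row entry that is $\le b_k^j$.

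The core of the argument is a one-line pigeonhole using $A_k$. Since the top row is sorted in increasing order, the entries $a_k^1, a_k^2, \ldots, a_k^j$ are all at most $a_k^j$, and by $A_k$ we have $a_k^j \le b_k^j$; hence all $j$ of these entries are $\le b_k^j$. On the other hand, by the time $b_k^j$ is processed, exactly $j-1$ top-row entries have been consumed (the ones paired with $b_k^1,\ldots,b_k^{j-1}$). Therefore at most $j-1$ of the $j$ entries $a_k^1,\ldots,a_k^j$ can have been used, so at least one of them remains unused. This witnesses an available top-row entry $\le b_k^j$, so the ``if'' branch is taken and no cycle arises at step $j$.

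Applying this observation for every $j = 1,\ldots,d$ shows that no step of the swapping procedure produces a cycle, which is precisely the statement $B_k$. The argument is elementary, and I do not foresee any real obstacle beyond carefully unwinding the definitions of swapping, of a cycle, and of the statements $A_k$ and $B_k$; the only subtlety worth checking is that the bottom row is genuinely processed in increasing order, so that exactly $j-1$ top-row entries (and no more) have been consumed before $b_k^j$ is examined.
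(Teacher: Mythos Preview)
Your proposal is correct and follows essentially the same argument as the paper's proof: both observe that by $A_k$ and the sortedness of the top row the $j$ entries $a_k^1,\ldots,a_k^j$ are all $\le b_k^j$, and since only $j-1$ top-row entries have been consumed before step $j$, a pigeonhole argument guarantees an available entry $\le b_k^j$, precluding a cycle. Your write-up is in fact a bit more explicit than the paper's in spelling out why the ``otherwise'' branch is never triggered.
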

\begin{proof}
Suppose $A_k$ is true.  Then $b^j_k \geq a_k^j \geq a^{j'}_k$ for all positive integers $j'\le j.$  It follows that when performing the $\wc$ procedure, at the $j^{th}$ step at most $j - 1$ of the $a_k^{j'}$ for $j' \leq j$ have already been paired with entries in the bottom row, so at least one $a^{j'}_k \leq b_k^j$ for $j' \leq j$ is available.  Statement $B_k$ follows.
\end{proof}

\begin{proposition}
$B_k$ implies $C_k.$
\end{proposition}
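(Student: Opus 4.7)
The plan is to deduce $C_k$ from $B_k$ via a short pigeonhole argument exploiting the fact that the greedy swap procedure processes bottom-row entries in increasing order. The subtlety is that the greedy swap $S$ and the sorted matching $M$ generally do not agree as bijections, although they produce the same underlying set $T$ of swapped-out top-row entries; the inequality $b_k^j \geq M(b_k^j)$ must then be extracted from the weaker inequalities $b_k^r \geq S(b_k^r)$ via a re-sorting.

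First I would unwind the definitions from Section \ref{2.3}. The swap procedure builds the sequence $x_1, x_2, \ldots$ by setting $x_r := \min(L_1 \setminus \{x_1, \ldots, x_{r-1}\})$, so since $L_1 = \{b_k^1 < \cdots < b_k^d\}$ is already sorted we have $x_r = b_k^r$. Under hypothesis $B_k$ there are no cycles, so at each step the partner $y_r := S(b_k^r)$ is a top-row entry satisfying $y_r \leq b_k^r$. Let $T := \{a_k^{i_1} < a_k^{i_2} < \cdots < a_k^{i_d}\}$ denote the sorted set of swapped-out top-row entries, indexed so that $a_k^{i_1} < \cdots < a_k^{i_d}$; then by Definition \ref{M} one has $M(b_k^j) = a_k^{i_j}$ for each $j$.

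The core step is to fix $j \in \{1,\ldots,d\}$ and examine the state of the swap procedure after its first $j$ iterations. The partners $y_1, \ldots, y_j$ are $j$ distinct elements of $T$, and by what was said above each satisfies $y_r \leq b_k^r \leq b_k^j$. Write these $j$ used elements in increasing order as $a_k^{i_{q_1}} < a_k^{i_{q_2}} < \cdots < a_k^{i_{q_j}}$ for some indices $q_1 < q_2 < \cdots < q_j$ in $\{1, \ldots, d\}$; then in particular $a_k^{i_{q_j}} \leq b_k^j$. On the other hand, since $q_1, \ldots, q_j$ are $j$ distinct positive integers we must have $q_j \geq j$, and the increasing ordering of $T$ then gives $a_k^{i_{q_j}} \geq a_k^{i_j}$. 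Combining the two inequalities yields $a_k^{i_j} \leq a_k^{i_{q_j}} \leq b_k^j$, which is exactly the statement of $C_k$.

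I do not expect any serious obstacle: the argument is essentially Hall-type counting. The only point requiring some care is the distinction between the greedy pairing $S$ (which need not coincide with $M$) and the order-preserving pairing $M$, together with the observation that what survives the passage from $S$ to $M$ is precisely the common subset $T$ of swapped-out entries, so that controlling the maximum of the first $j$ entries used by $S$ controls the $j$-th smallest element of $T$.
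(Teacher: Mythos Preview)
Your proof is correct and rests on the same pigeonhole idea as the paper's. The only cosmetic difference is that the paper splits into two cases according to whether $S(b_k^j)$ itself already has rank $\geq j$ in $T$, invoking pigeonhole only in the second case, whereas you bypass the case split by observing directly that the maximum of $\{S(b_k^1),\ldots,S(b_k^j)\}$ must have rank at least $j$ in $T$. Your formulation is arguably cleaner and makes the distinction between $S$ and $M$ more explicit.
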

\begin{proof}  Suppose $B_k$ is true.  Consider the integer $j'$ for which $S(b^j_k)=a^{i_{j'}}_k.$  We consider two cases:

\begin{description}
\item[Case 1] $j'\ge j.$ Then $b^j_k\ge a^{i_{j'}}_k\ge a^{i_j}_k.$

\item[Case 2] $j'<j.$ Then by the pigeonhole principle there exist $j_1<j$ and $j_2>j'$ for which $S(b^{j_1}_k)=a^{i_{j_2}}_k.$ Then we have $b^j_k\ge b^{j_1}_k\ge a^{i_{j_2}}_k\ge a^{i_{j'}}_k.$\end{description}\end{proof}

\begin{lemma}
\label{hole-invariance}
$C_k$ implies $D_k.$\end{lemma}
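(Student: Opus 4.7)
The plan is to show that for each hole $a_k^i$ in $\binom{a_k}{b_k}$, the rank of $a_k^i$ in the sorted new top row equals $i$, i.e.\ $\alpha_k^i = a_k^i$. As a multiset, the new top row (after the swap step but before sorting) is $H \cup B$, where $H := \{a_k^{i'} : i' \text{ is a hole index}\}$ consists of the unchanged hole values and $B := \{b_k^j : 1 \le j \le d\}$ consists of the values swapped in from the bottom row. Since the top row of $\binom{a_k}{b_k}$ is strictly increasing, the holes with value strictly below $a_k^i$ are precisely those with index strictly below $i$; denote their count by $h_{<i}$. Setting
\[m := |\{j : b_k^j < a_k^i\}|, \qquad p := |\{j : a_k^{i_j} < a_k^i\}|,\]
the rank of $a_k^i$ in $H \cup B$ equals $h_{<i} + m + 1$, and since $h_{<i} + p = i - 1$, the claim $a_k^i = \alpha_k^i$ is equivalent to $m = p$.

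The direction $m \le p$ is an immediate consequence of $C_k$: if $b_k^j < a_k^i$, then $C_k$ gives $a_k^{i_j} \le b_k^j < a_k^i$, so $j$ contributes to $p$. For the reverse direction $p \le m$, I would appeal directly to the swap procedure of Section \ref{2.3}. Since $a_k^i$ is a hole, it is never selected by $S$, and hence is available at every step of the procedure. For any $j$ with $b_k^j \ge a_k^i$ (note $b_k^j = a_k^i$ is impossible, since then $a_k^i$ would be the largest available entry $\le b_k^j$ at step $j$ and would be selected, contradicting that it is a hole), the value $a_k^i$ is an available candidate at step $j$ satisfying $a_k^i \le b_k^j$, so the step is not a cycle and $S(b_k^j) \ge a_k^i$. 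Because the sets $\{S(b_k^j) : 1 \le j \le d\}$ and $\{a_k^{i_j} : 1 \le j \le d\}$ coincide, at least $d - m$ of the paired top-row entries are $\ge a_k^i$, which yields $p \le m$. Combining the two inequalities gives $m = p$ for every hole $a_k^i$, which is precisely $D_k$.

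The main obstacle is keeping careful track of the distinction between the order-preserving $M$-pairing (which is what $C_k$ directly constrains) and the algorithmic $S$-pairing produced by the swap procedure; the two agree as sets of paired top-row entries but assign different partners to individual entries of the bottom row. The reverse inequality $p \le m$ is most naturally phrased in terms of $S$ via the availability property of holes, while the forward inequality $m \le p$ is naturally phrased in terms of $M$ via $C_k$, and one must pass between the two viewpoints cleanly.
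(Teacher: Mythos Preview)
Your argument is correct and is a genuinely different route from the paper's. The paper argues by structural induction: it shows that the first hole $a_k^{h_1}$ keeps its position after $\mathfrak{wc}$ (using $C_k$ for the bound from the right and a pigeonhole argument in the $S$-pairing for the bound from the left), then observes that $\mathfrak{wc}$ respects the splitting of the tableau at that first block of holes and recurses on the smaller right-hand piece. You instead fix an arbitrary hole $a_k^i$ and compute its rank in the sorted new top row directly, reducing $D_k$ to the single equality $m=p$. Your inequality $m\le p$ is the same use of $C_k$ as the paper's right-hand bound, while your inequality $p\le m$ is a cleaner global version of the paper's left-hand bound: rather than a pigeonhole argument near the first hole, you exploit that a hole is perpetually available, so every $b_k^j>a_k^i$ forces $S(b_k^j)\ge a_k^i$. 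This direct counting avoids the recursive decomposition entirely and does not need $B_k$ as an auxiliary input (you establish the non-cycle property locally for the relevant steps). The paper's approach, by contrast, makes the stronger structural statement that $\mathfrak{wc}$ factors through the split at the first hole block; that fact is not used elsewhere, so nothing is lost by your shortcut. One small point worth stating explicitly for the rank computation: the same availability argument you use for the fixed hole $a_k^i$ shows that no $b_k^j$ equals any hole value, so $H$ and $B$ are disjoint and the sorted top row has no repeated entries; this is what makes ``rank of $a_k^i$'' unambiguous.
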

\begin{proof}
First we prove that the first hole $a^{h_1}_k$ remains in index $h_1$ after $\wc$.  Represent all $d$ swaps arising while performing $\wc$ on $\binom{a_k}{b_k}$ as two-way arrows connecting $b^j_k$ to $a^i_k.$ Let $r\ge 1$ be the minimum integer for which $a^{h_1+r}_k$ is not a hole. We need to prove that $M(a^{h_1+r}_k)\ge a^{h_1+r-1}_k$ and $M(a^{h_1-1}_k)\le a^{h_1}_k,$ which will imply that the position of $a^{h_1}_k$ will not be changed during sorting.

Assuming $C_k,$ $M(a^{h_1+r}_k)\ge a^{h_1+r}_k \ge a^{h_1+r-1}_k.$ Hence the first inequality is true.

Now we prove the second inequality. By Remark \ref{meta-increasing} we have $M(a^{h_1-1}_k)=b^{h_1-1}_k.$ Now assume for contradiction that $b^{h_1-1}_k>a^{h_1}_k.$ Then $b^{h_1-1}_k$ is swapped with $a^l_k$ for some $l>h_1,$ otherwise $a^{h_1}_k$ would not be a hole.

Consider the swaps of the entries $a^1_k,\ldots,a^{h_1-1}_k.$  Use the pigeonhole principle on $a^1_k,\ldots,a^{h_1-1}_k$ to conclude that there exist $x<h_1$ and $y\ge h_1$ for which $S(b_k^y)=a_k^x.$  But now we have $b^y_k>b^{h_1-1}_k\ge a^l_k>a^{h_1}_k,$ which, as $a_k^{h_1}$ is greater than $a_k^x$ and is never swapped during the $\wc$ procedure, contradicts that $S(b_k^y) = a_k^x.$  Hence we conclude $b^{h_1-1}_k\le a^{h_1}_k.$  We conclude as a corollary that $M(b^j_k)=a^j_k$ for all $j<h_1$ since $a^{h_1}_k>b^j$ for all $j<h_1.$

Let $\binom{a_k'}{b_k'}$ denote the tableau obtained from $\binom{a_k}{b_k}$ by including only the first $h_1 + r - 1$ entries of the first row $a_k$ and only the first $h_1 - 1$ entries of the second row $b_k$.  Similarly, let $\binom{a_k''}{b_k''}$ be the tableau obtained from $\binom{a_k}{b_k}$ by removing the entries $a_k^1, ..., a_k^{h_1 + r - 1}$ from the top row $a_k$, removing the entries $b_k^1, ..., b_k^{h_1 - 1}$ from the bottom row $b_k$, and left-justifying the result.  Clearly, $\binom{a_k}{b_k}$ is obtained from $\binom{a_k'}{b_k'}$ and $\binom{a_k''}{b_k''}$ by concatenation of the rows.  The previous paragraph shows, however, that $\wc \binom{a_k}{b_k}$ is obtained similarly from $\wc \binom{a_k'}{b_k'}$ and $\wc \binom{a_k''}{b_k''}$ by concatenation of the rows.  As $C_k$ holds for $\binom{a_k}{b_k}$, the analogous statement holds for $\binom{a_k''}{b_k''}$, and as the row lengths of $\binom{a_k''}{b_k''}$ are strictly smaller than the row lengths of $\binom{a_k}{b_k},$ the statement $D_k$ follows by induction.\end{proof}

For convenience let us also extend Definition \ref{M} so that $M(a^i_k)=a^i_k=\alpha^i_k$ when $a^i_k$ is a hole, even though $a^i_k$ is not swapped with an entry in the row $b_k$.

When $A_k$ holds, and hence $B_k, C_k$, and $D_k$ as well, the the holes in $\binom{a_k}{b_k}$ remain invariant under $\wc$.  In particular, when $A_k$ holds, the $\mathfrak{wc}$ procedure, which includes both the swapping and the sorting, can be visualized as drawing $d$ non-intersecting two-way arrows between the $b^j_k$ and $a^{i_j}_k$ and swapping the $b^j_k$ according to the two-way arrows.

\begin{definition}
Define $E_k$ to be the following statement: the behavior of $\mathfrak{wc}$ acting on the two-row tableau $\binom{a_k}{b_k}$ is equivalent to drawing the $d$ non-intersecting two-way arrows between the $b^j_k$ and $a^{i_j}_k$ and swapping the $b^j_k$ according to the two-way arrows. 
\end{definition}

We have seen that $A_k$ implies $E_k$ for all $k \geq 0$.  Clearly, $E_k$ also implies $A_k$ for all $k$.

\begin{definition}
For integers $k \geq 0$ such that $E_{k'}$ holds for all nonnegative integers $k' \leq k$, we make the following definitions:
\begin{enumerate}
\item For integers $k \geq 0$ such that $E_{k'}$ holds for all nonnegative integers $k' \leq k$, let the $k'$-\emph{displacement} $D(j,k')$ be the horizontal length of the two-way arrow representing the swap $M(b^j_{k'}) = a_{k'}^{j + D(j, k')}.$ 

\item For all integers $k \geq 0$, let $F_k$ be the statement that $E_{k'}$ holds for all nonnegative integers $k' \leq k$ and that $b^i_{k'+1}\ge a^{i+D(i,k')}_{k'+1}$ for all $i.$

\item For all integers $k \geq 1$, let $G_k$ be the statement that $E_{k'}$ holds for all nonnegative integers $k' \leq k$ and that $D(j,k-1)\le D(j,k)$ for all $j$.

\end{enumerate}\end{definition}

It is clear that $D(j,k)\ge 0$ and that $D(1,k)\le \cdots \le D(d,k)$ because the two-way arrows are non-intersecting.

\begin{remark}
$E_0$ holds by inspection of the two-row tableau associated to $|\lambda, (0, \frac{e}{2})\rangle$\end{remark}

\begin{lemma}\label{f-implies-g-lemma}
$F_k$ implies $G_{k+1}.$
\end{lemma}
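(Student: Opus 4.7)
The plan is to derive $G_{k+1}$ in two steps: first the statement $E_{k+1}$, and then the pointwise comparison $D(j, k) \leq D(j, k+1)$ for all $j$.

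First I would extract $A_{k+1}$ from $F_k$ by observing that $D(i,k) \geq 0$ and that the row $a_{k+1}$ is sorted in increasing order, so the inequality $b^i_{k+1} \geq a^{i + D(i,k)}_{k+1} \geq a^i_{k+1}$ follows directly. The previously established chain $A \Rightarrow B \Rightarrow C \Rightarrow D$ and the equivalence $A \Leftrightarrow E$ then give $E_{k+1}$. In particular the sorted non-hole positions $q_1 < \cdots < q_d$ at step $k+1$ are well-defined, with $D(j, k+1) = q_j - j$; writing $p_j := j + D(j, k)$ for the sorted non-hole positions at step $k$, the $F_k$ inequality reads $a^{p_j}_{k+1} \leq b^j_{k+1}$ for every $j$.

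To show $q_j \geq p_j$ for all $j$, I would induct downwards on $j$. The base case $j = d$ is clear: $q_d$ equals the maximum index $i$ with $a^i_{k+1} \leq b^d_{k+1}$, and $p_d$ is at most this maximum by $F_k$. For the inductive step, given $q_l \geq p_l$ for all $l > j$, the tuple
\[
(1, 2, \ldots, j-1, p_j, q_{j+1}, \ldots, q_d)
\]
is strictly increasing (using $p_j \geq j$ and $p_j < p_{j+1} \leq q_{j+1}$) and is a valid sorted non-hole candidate for $\binom{a_{k+1}}{b_{k+1}}$: the admissibility $a^l_{k+1} \leq b^l_{k+1}$ at indices $l < j$ comes from $A_{k+1}$, at index $j$ from $F_k$, and at indices $l > j$ from $C_{k+1}$. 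By the lex-maximality-from-the-right characterization of the swap procedure's output, $q_j$ is the maximum $j$-th coordinate of any valid sorted non-hole set whose higher coordinates are $q_{j+1}, \ldots, q_d$; in particular $q_j \geq p_j$. This yields $D(j, k) \leq D(j, k+1)$ and, combined with $E_{k+1}$, the desired conclusion $G_{k+1}$.

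The main obstacle is the lex-maximality characterization invoked above: that the forward greedy swap procedure of Section \ref{2.3} produces the unique valid sorted non-hole set maximizing the last coordinate first, then the second-to-last, and so on. This is equivalent to the assertion that a \emph{backward} greedy, processing $b^d_{k+1}, \ldots, b^1_{k+1}$ in order and always picking the largest unused admissible top-row index, produces the same non-hole set as the forward greedy defined in Section \ref{2.3}. This is a standard exchange-argument fact for the staircase bipartite matching arising here, provable by induction on $d$ after observing that $A_{k+1}$ forces the overall maximum admissible top-row index for $b^d_{k+1}$ to lie in every lex-maximal valid non-hole set.
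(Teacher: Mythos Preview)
Your argument is correct but takes a different route from the paper's. You first derive $A_{k+1}$ (hence $E_{k+1}$) exactly as needed, and then reduce the comparison $D(j,k+1)\ge D(j,k)$ to a structural fact about the swap procedure: that the resulting non-hole set is lex-maximal from the right among all ``valid'' index tuples $q_1<\cdots<q_d$ with $a^{q_j}_{k+1}\le b^j_{k+1}$. Your downward induction, exhibiting $(1,\ldots,j-1,p_j,q_{j+1},\ldots,q_d)$ as a valid competitor, is clean, and the forward-greedy $=$ backward-greedy equivalence you invoke is indeed the right way to justify the characterization.

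The paper avoids this auxiliary lemma entirely. It argues by an ascending-chain contradiction: suppose $D(i,k+1)<D(i,k)$ for some $i$; then the position $m=i+D(i,k)$ cannot be a hole (because $b^j_{k+1}\ge a^m_{k+1}$ for all $j\ge i$, so in the $S$-procedure each such $b^j_{k+1}$ selects an index $\ge m$, forcing $q_i\ge m$ and hence $D(i,k+1)\ge D(i,k)$, contrary to assumption). Therefore $M(b^{i'}_{k+1})=a^m_{k+1}$ for some $i'$, and by monotonicity of $M$ one gets $i'>i$; since $i'+D(i',k)>i+D(i,k)=i'+D(i',k+1)$, the same violation recurs at $i'$. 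Iterating gives an infinite increasing chain $i<i'<i''<\cdots$, which is impossible. This is shorter and self-contained: no separate characterization of the non-hole set is needed. Your approach, on the other hand, isolates a reusable combinatorial statement (the lex-maximality of the greedy matching) that would apply whenever one wants to compare the swap output against any candidate admissible tuple, not just the one coming from step $k$.
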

\begin{proof}  Suppose $F_k$ holds.  By definition of $F_k$, we have that $b^i_{k+1}\ge a^{i+D(i,k)}_{k+1}$ is true for all $i$ for which the expressions are defined.  We need to show that $D(i,k+1)\ge D(i,k)$ for all $i$.  Assume for contradiction that for some $i,$ we have $D(i,k+1)< D(i,k).$ Since $b^i_{k+1}\ge a^{i+D(i,k)}_{k+1},$ then $a^{i+D(i,k)}_{k+1}$ cannot be a hole and must be swapped with some element during the procedure $\wc$, i.e. we have $M(b_{k+1}^{i'})=a^{i+D(i,k)}_{k+1}$ for some index $i'.$ By Remark \ref{meta-increasing} we must have $i'>i.$ But now, since $a^{i'+D(i',k)}_{k+1}>a_{k+1}^{i+D(i,k)}$ and $a^{i'+D(i',k)}_{k+1}\le b_{k+1}^{i'}$ by hypothesis, $a^{i'+D(i',k)}_{k+1}$ must be swapped.  By induction, this procedure produces an infinite increasing list of entries from the second row $b_k$, which is a contradiction as the tableau $\binom{a_k}{b_k}$ is finite.  It follows that $D(i, k + 1) \leq D(i, k)$ for all $i$, as needed.\end{proof}

The following definition will be useful terminology in the final step of the proof of Theorem \ref{combinatorial-theorem}:

\begin{definition}In general, let us refer to the leftmost $ke+\frac{e}{2}$ indices of the first row of $\binom{a_k}{b_k}$ as the \emph{filler} of $\binom{a_k}{b_k}.$\end{definition}

\begin{lemma}\label{g-implies-f-lemma} $G_k$ implies $F_k$ for all $k\ge 1.$ Also, $E_0$ implies $F_0.$
\end{lemma}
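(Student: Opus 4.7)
My plan is to unwind the inequality $b_{k+1}^i \ge a_{k+1}^{i + D(i,k)}$ using Remark~\ref{delta-e}. The hypothesis (either $E_0$ for the base case, or $G_k$ for $k \ge 1$) gives $E_k$, so $\wc$ is realized by the non-intersecting effective matching. Setting $J := i + D(i,k)$, Remark~\ref{delta-e} yields $b_{k+1}^i = \beta_k^i = a_k^J$, while $a_{k+1}^J$ equals either $\beta_k^1 + J - 1$ (if $J \le e$) or $\alpha_k^{J-e} + e$ (if $J > e$). A preliminary step, which I would carry out first, is to verify by induction that $b_k^1 = a_k^1$ and hence $D(1,k) = 0$ for all $k$: the base case holds because for a symmetric bipartition at charge $(0, e/2)$ the leftmost entries of both rows equal $e/2 - \tilde{d}$; the inductive step uses that $\wc$ trivially pairs equal smallest entries and that Remark~\ref{delta-e} then forces $a_{k+1}^1 = \beta_k^1 = a_k^1 = b_{k+1}^1$.

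With $D(1,k) = 0$ in hand, the case $J \le e$ is immediate: $a_{k+1}^J = a_k^1 + J - 1 \le a_k^J$ by strict monotonicity of the integer sequence $a_k$. For $J > e$, the inequality reduces to $\alpha_k^{J-e} \le a_k^J - e$. Since $\alpha_k$ is strictly increasing, its $(J-e)$-th entry is the maximum of the values sitting at positions $1, \ldots, J - e$, which under the effective matching split into hole values $a_k^h$ (for $h \le J-e$) and swapped-in values $b_k^{j'}$ (for $j' + D(j',k) \le J - e$). The hole maximum is at most $a_k^{J-e} \le a_k^J - e$ by monotonicity of $a_k$.

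The main obstacle is bounding the non-hole maximum $b_k^{j^*} \le a_k^J - e$, where $j^*$ is the largest index with $j^* + D(j^*,k) \le J - e$. For $k \ge 1$, my plan is to write $b_k^{j^*} = \beta_{k-1}^{j^*} = a_{k-1}^{j^* + D(j^*, k-1)}$ via Remark~\ref{delta-e} and $E_{k-1}$, then use $G_k$ to bound $j^* + D(j^*, k-1) \le j^* + D(j^*, k) \le J - e$, and finally chain $b_k^{j^*} \le a_{k-1}^{J-e} \le \alpha_{k-1}^{J-e} = a_k^J - e$ using monotonicity of $a_{k-1}$, the pointwise bound $\alpha_{k-1} \ge a_{k-1}$ (from $C_{k-1}$ applied at non-hole positions), and Remark~\ref{delta-e} at the step $k-1 \to k$ (valid since $J > e$). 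For the base case $k = 0$, the symmetry-specific identity $b_0^{j^*} = a_0^{j^* + e/2} - e/2$ combined with $j^* \le J - e$ and strict monotonicity of $a_0$ yields $b_0^{j^*} \le a_0^J - e$ directly.
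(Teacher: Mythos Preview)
Your proof is correct and follows essentially the same approach as the paper's: both reduce the target inequality via Remark~\ref{delta-e} to bounding $\alpha_k^{J-e}$ by $a_k^J - e$, split into the filler, hole, and non-hole cases, and handle the non-hole case via $G_k$ (for $k\ge 1$) or the symmetric-bipartition identity $a_0^{j+e/2}=b_0^j+e/2$ (for $k=0$). Your use of the pointwise bound $\alpha_{k-1}^p \ge a_{k-1}^p$ (a direct consequence of $C_{k-1}$) streamlines the inductive case slightly, replacing the paper's further subcase split (Cases 2.2.1/2.2.2 according to whether $\alpha_{k-1}^{J-e}$ is a hole) by a single chain of inequalities.
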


\begin{proof}
We want to prove that $b^i_{k+1}\ge a^{i+D(i,k)}_{k+1}.$ By Remark \ref{delta-e}, we know that $b^i_{k+1}=\beta^i_k.$ We first consider a special case when $a^{i+D(i,k)}_{k+1}$ lies in the filler of $\binom{a_{k+1}}{b_{k+1}},$ in which case, by Remark \ref{delta-e}, $a^{i+D(i,k)}_{k+1}=a^{1}_{k+1}+i+D(i,k)-1.$ If $a^{i+D(i,k)}_{k+1}$ is not in the filler, we will have $a^{i+D(i,k)}_{k+1}=\alpha^{i+D(i,k)-e}_k+e.$ 
\hfill \break
\begin{description}
\item [Case 0]
$a^{i+D(i,k)}_{k+1}$ is in the filler of $\binom{a_{k+1}}{b_{k+1}}.$ If $i+D(i,k)=1,$ then clearly $b^i_{k+1}\ge a^{1}_{k+1}=a^{i+D(i,k)}_{k+1}.$ Otherwise $$a^{i+D(i,k)}_{k+1}=a^{1}_{k+1}+i+D(i,k)-1=a^{1}_k+i+D(i,k)-1$$ since $a^1=b^1$ is invariant under $\mathfrak{wc}$ and $\Delta_e.$ By definition of $D(i,k),$ we also know $b^i_{k+1}=\beta^i_{k}=a^{i+D(i,k)}_k.$  It then follows that $$b^i_{k+1}=a^{i+D(i,k)}_k\ge a^{1}_{k}+i+D(i,k)-1=a^{i+D(i,k)}_{k+1}.$$
\end{description}
\hfill \break
Outside of the filler we will have $a^{i+D(i,k)}_{k+1}=\alpha^{i+D(i,k)-e}_k+e.$ We consider two cases.
\hfill \break
\begin{description}
\item[Case 1] $\alpha^{i+D(i,k)-e}_k=a^{i+D(i,k)-e}_k$ is hole. Then $$b^i_{k+1}=\beta^i_k=a_k^{i+D(i,k)}\ge a_k^{i+D(i,k)-e}+e=\alpha_k^{i+D(i,k)-e}+e=a^{i+D(i,k)}_{k+1},$$ so we are done.
\hfill \break
\item[Case 2] Otherwise $\alpha^{i+D(i,k)-e}_k=b^{i+D(i,k)-e-\delta}_k$ for some $k$-displacement $\delta.$ Note that $\delta\le D(i,k)$ because the displacement lengths associated to all of the non-holes in the top row are decreasing from right to left. Note also that $\beta^i_k=a^{i+D(i,k)}_k.$ We therefore need to show $a^{i+D(i,k)}_k\ge b^{i+D(i,k)-e-\delta}_k+e.$ We consider two cases.
\hfill \break
\item[Case 2.1] $k=0.$ Recall that $\binom{a_0}{b_0}$ is the two-row tableau associated to a charged symmetric bipartition. Because $b^{i+D(i,0)-e-\delta}_0$ is well-defined, it follows that $i+D(i,0) \geq e > \frac{e}{2}.$ Hence $a^{i+D(i,0)}_0$ is not in the filler of $\binom{a_0}{b_0}$. Thus $$a^{i+D(i,0)}_0=b^{i+D(i,0)-\frac{e}{2}}_0+\frac{e}{2}\ge b^{i+D(i,0)-e}_0+e\ge b^{i+D(i,0)-e-\delta}_0+e,$$ and we are done.
\hfill \break
\item[Case 2.2] Otherwise $k\ge 1.$

Assume for contradiction that $a^{i+D(i,k)}_k$ is in the filler of $\binom{a_k}{b_k},$ which has length $ke+\frac{e}{2}.$ Then $i+D(i,k)\le ke+\frac{e}{2}.$ But since $a^{i+D(i,k)}_{k+1}$ is not in the filler of $\binom{a_{k+1}}{b_{k+1}},$ then $i+D(i,k)> (k+1)e+\frac{e}{2},$ a contradiction.

We conclude that $a^{i+D(i,k)}_k$ is not in the filler of $\binom{a_k}{b_k}.$ Then the desired inequality $a^{i+D(i,k)}_k\ge b^{i+D(i,k)-e-\delta}_k+e$ is equivalent to the inequality $\alpha_{k-1}^{i+D(i,k)-e}+e\ge \beta^{i+D(i,k)-e-\delta}_{k-1}+e.$
\hfill \break
\hfill \break
We consider two cases.
\hfill \break
\item[Case 2.2.1] $\alpha_{k-1}^{i+D(i,k)-e}$ is a hole. Then $\alpha_{k-1}^{i+D(i,k)-e}=a_{k-1}^{i+D(i,k)-e}.$  By $G_k,$ $\beta^{i+D(i,k)-e-\delta}_{k-1}=a^{i+D(i,k)-e-\delta+\delta'}_{k-1}$ for some $\delta'\le \delta.$  As $a_{k-1}^{i+D(i,k)-e}\ge a^{i+D(i,k)-e-\delta+\delta'}_{k-1},$ the desired inequality $\alpha_{k-1}^{i+D(i,k)-e}+e\ge \beta^{i+D(i,k)-e-\delta}_{k-1}+e$ follows, as needed.
\hfill \break
\item[Case 2.2.2] Otherwise $\alpha_{k-1}^{i+D(i,k)-e}$ is not a hole.  By $G_k$, we have $\alpha_{k-1}^{i+D(i,k)-e}=b_{k-1}^{i+D(i,k)-e-\delta'_1}$ for some $\delta'_1\le \delta$ and $\beta^{i+D(i,k)-e-\delta}_{k-1}=a_{k-1}^{i+D(i,k)-e-\delta+\delta'_2}$ for some $\delta'_2\le \delta$ and $\delta'_2\le \delta'_1.$ We need to show $b_{k-1}^{i+D(i,k)-e-\delta'_1}\ge a_{k-1}^{i+D(i,k)-e-\delta+\delta'_2}.$ As $$M\left(b_{k-1}^{i+D(i,k)-e-\delta}\right)=a_{k-1}^{i+D(i,k)-e-\delta+\delta'_2},$$ it follows that $b_{k-1}^{i+D(i,k)-e-\delta} \geq a_{k-1}^{i+D(i,k)-e-\delta+\delta'_2}$ by $C_{k - 1}$ (note that $G_k$ implies $E_{k - 1}$ which in turn implies $A_{k - 1}$ and hence $C_{k - 1}$).  In particular, we have $$b_{k-1}^{i+D(i,k)-e-\delta'_1}\ge b_{k-1}^{i+D(i,k)-e-\delta}\ge a_{k-1}^{i+D(i,k)-e-\delta+\delta'_2},$$ completing this case and the proof.
\end{description}
\end{proof}

\begin{proof}[Proof of Theorem \ref{combinatorial-theorem}] Statement $E_0$ is true, so by Lemma \ref{g-implies-f-lemma} $F_0$ is true as well.  By Lemma \ref{f-implies-g-lemma} and Lemma \ref{g-implies-f-lemma} it follows that $F_k$ holds for all integers $k \geq 0$ and that $G_k$ holds for all integers $k \geq 1$.  In particular $B_k$ holds for all $k \geq 0$, and by Lemma \ref{reduction-lemma} this completes the proof of Theorem \ref{combinatorial-theorem}.\end{proof}

\begin{remark}
The inductive argument used to prove Theorem \ref{combinatorial-theorem} depended on only the following base conditions of the two-row tableau $\binom{a_0}{b_0}$ representing the symmetric charged bipartition $|(\lambda^1, \lambda^1),(0,\frac{e}{2})\rangle$:

\begin{enumerate}
\item $A_0$ (hence $F_0$).
\item For all $i,$ $a^{i+D(i,0)}_0\ge b^{i+D(i,0)-e}_0+e.$
\end{enumerate}
 In terms of general charged bipartitions (where $\lambda^1$ is not necessarily equal to $\lambda^2$ and $s=(s_1,s_2)$ is not necessarily equal to $(0,\frac{e}{2})$), condition (1) means that the parts of the charged bipartition $|\lambda,s\rangle$, where $s_1\le s_2$, satisfy $\lambda_i^1\ge \lambda^2_{i+s_2-s_1}$ for all $i.$
\end{remark}

\end{document}